 \newtheorem{thm}{Theorem}[section]
 \newtheorem{cor}{Corollary}[section]
 \newtheorem{ex}{Example}[section]
 \newtheorem{rem}{Remark}[section]
\title{Representation Theory of Finite Groups in Wiener--Hopf Factorization}
\author{\bf Victor Adukov }
\begin{document}
\maketitle
\begin{center}
\textit{Lenin avenue 76, Department of Mathematical and Functional
Analysis, National Research South Ural State University, Chelyabinsk
454080, Russia\\
e-mail: victor.m.adukov@gmail.com}
\end{center}

\begin{abstract}
We consider the Wiener--Hopf factorization problem for a matrix
function that is completely defined by its first column: the
succeeding columns are obtained from the first one by means of a
finite group of permutations. The symmetry of this matrix function
allows us to reduce the dimension of the problem. In particular, we
find some relations between its partial indices and can compute some
of the indices. In special cases we can explicitly obtain the
Wiener--Hopf factorization of the matrix function.
\end{abstract}
{\bf Key words:} {Representation theory of finite groups,
Wiener--Hopf factorization, partial indices}\\
{\bf AMS 2010
Classification:} {Primary 47A68; Secondary 20C05}

\section{Introduction}
Let $\Gamma $ be a simple smooth closed contour in the complex plane
$\mathbb C$ bounding the domain $D_{+} $. The complement of  $D_{+}
\cup \Gamma $ in $\overline {\mathbb C} ={\mathbb C}\cup \{\infty
\}$ will be denoted by $D_{-} $. We can assume that $0\in D_{+}$.
Let $A(t)$  be a continuous  and invertible  $n\times n$ matrix
function on $\Gamma $.

{\em A (right) Wiener--Hopf factorization} of $A(t)$ is  its
representation in the form
\begin{equation}\label{factWH}
A(t)=A_-(t)d(t)A_+(t), \ t\in \Gamma.
\end{equation}
Here $A_{\pm}(t)$ are continuous and invertible  matrix functions on
$\Gamma $ that admit analytic continuation into  $D_{\pm}$ and their
continuations $A_{\pm}(z)$ are invertible into these domains;
$d(t)={\rm diag}[t^{\rho_1},\ldots,t^{\rho_n}]$, where integers
$\rho_1,\ldots,\rho_n$ are called {\em the (right) partial indices}
of $A(t)$ \cite{CG,LS}.

In general, a matrix function $A(t)$ with continuous entries does
not admit the Wiener--Hopf factorization. Let $\mathfrak A$ be an
algebra of continuous functions on $\Gamma$ such that any invertible
matrix function $A(t)\in G\mathfrak A^{n\times n}$, $n\geq 1$,
admits the Wiener--Hopf factorization with the factors
$A_{\pm}(t)\in G\mathfrak A^{n\times n}$. Here $G\mathfrak
A^{n\times n}$ is the group of invertible elements of the algebra
$n\times n$ matrix functions with entries in $\mathfrak A$. Basic
examples of such algebras are the Wiener algebra $W(\mathbb T)$ (or
more generally, a decomposing $R$-algebra) and the algebra
$H_\mu(\Gamma)$ of H\"older continuous functions on $\Gamma $
(\cite{CG}, Ch.2).

For $\mathfrak A= W(\mathbb T)$ or $H_\mu(\Gamma)$ the scalar
problem can be solved explicitly. In the matrix case explicit
formulas for the factors $A_{\pm}(t)$ and the partial indices of an
arbitrary matrix function do not obtain. Therefore,  it  is
interesting to find  classes  of  matrix  functions for  which  an
explicit construction of the factorization  is possible.

In the present work we consider the factorization problem for the
algebra $\mathfrak{A}\bigotimes\mathbb{C}[G]$, where $\mathbb{C}[G]$
is the group algebra of a finite group $G=\{g_1=e, g_2, \ldots,
g_n\}$. In other word, we consider matrix functions of the form
\begin{equation}\label{matrixA}
A(t)= \begin{pmatrix}
a(g_1)&a(g_1g^{-1}_2)&\cdots&a(g_1g^{-1}_n)\\
a(g_2)&a(g_2g^{-1}_2)&\cdots&a(g_2g^{-1}_n)\\
\vdots&\vdots&\ddots&\vdots\\
a(g_n)&a(g_ng^{-1}_2)&\cdots&a(g_ng^{-1}_n)\\
\end{pmatrix}.
\end{equation}
Here $a(g)\in \mathfrak A$. This matrix  is obtained from the first
column by means of a finite group of permutations that is isomorphic
to $G$. For example, if $G$ is a cyclic group with a generator
$\zeta$, an enumeration of $G$ is $\{e, \zeta,\ldots,\zeta^{n-1}\}$,
and $a_j=a(\zeta^j)$; then
$$
A(t)= \begin{pmatrix}
a_1(t)&a_{n-1}(t)&\cdots&a_2(t)\\
a_2(t)&a_1(t)&\cdots&a_3(t)\\
\vdots&\vdots&\ddots&\vdots\\
a_n(t)&a_{n-2}(t)&\cdots&a_1(t)\\
\end{pmatrix}
$$
is a circulant matrix function.

Matrix functions of the form~(\ref{matrixA}) possesses of specific
symmetry. Hence, it is natural to expect that an application of the
representation theory for finite groups allows to reduce the
dimension $n$ of the problem. Really, it turns out that $A(t)$ can
be explicitly reduced to a block diagonal form by a constant linear
transformation.

Knowledge of the degrees $n_j$ of inequivalent irreducible unitary
representations of $G$ allows to find the dimensions and
multiplicities of these blocks. For example, let $G=A_4$. The order
$|G|$ of the group is $12$ and the number $s$ of irreducible
representations equals $4$. From the well-known relation
$n_1^2+\ldots+n_s^2=|G|$ it follows that $n_1=n_2=n_3=1, \,n_4=3$.
Thus, the $12$-dimensional factorization problem can be reduce to
three scalar problems and to a $3$-dimensional problem of
multiplicity $3$. If $G$ is abelian, then the factorization problem
is reduced to $n$ one-dimensional problems.

Also we consider the factorization problem in the algebra
$\mathfrak{A}\bigotimes Z\mathbb{C}[G]$, where $Z\mathbb{C}[G]$ is
the center of $\mathbb{C}[G]$. In this case the problem is
explicitly reduced to scalar problems.

\section{Main results}
\paragraph{1. The factorization in the algebra $\mathfrak{A}\bigotimes\mathbb{C}[G]$.}
Let $G$ be a finite group of order $|G|=n$ with identity $e$. We fix
an enumeration of $G$, $G=\{g_1=e,g_2,\ldots,g_n\}$. Let ${\mathbb
C}[G]$ be the group algebra, i.e. an inner product space of formal
linear combinations $a=\sum_{g\in G}a(g)\,g$ of $g\in G$ with
coefficients $a(g)$ in ${\mathbb C}$. The inner product is defined
by

\begin{equation}\label{innerpro}
(a,b)=\frac{1}{|G|}\sum_{g\in G}a(g)\,\overline{b(g)}.
\end{equation}

The group $G$ is embedded into ${\mathbb C}[G]$ by identifying an
element $g$ with the linear combination $1\cdot g$. Then
$g_1,\ldots,g_n$ is a basis of the linear space ${\mathbb C}[G]$.
The group operation in $G$ defines a multiplication of the elements
of the basis, and  ${\mathbb C}[G]$  is endowed  by the structure of
an algebra over the field ${\mathbb C}$. We can also consider
${\mathbb C}[G]$ as the algebra of functions $a(g)$ with the
convolution as multiplication.

Let $\mathcal{A}$ be the operator of multiplication by an element
$a=\sum_{g\in G}a(g)\,g$ in the space ${\mathbb C}[G]$. The matrix
of $\mathcal A$ with respect to the basis  $g_1,\ldots,g_n$ has the
form~(\ref{matrixA}).

We can identify the group algebra ${\mathbb C}[G]$ with the algebra
of matrices of  the form~(\ref{matrixA}).

Let $\mathfrak A$ be an algebra over $\mathbb C$  with identity $I$.
Denote by $\mathfrak{A}\bigotimes\mathbb{C}[G]$  the algebra of
matrices of  the form~(\ref{matrixA}), where $a(g_k)\in\mathfrak A$.

Let $s$ be the number of conjugacy classes of $G$,
$\{\Phi_1,\ldots,\Phi_s\}$ a set of inequivalent irreducible unitary
representations of $G$, and $n_1,\ldots,n_s$ their degrees. Let
$V_k$ be the representation space of $\Phi_k$, $k=1,\ldots,s$. We
pick some orthonormal basis of $V_k$. Let $\varphi_{ij}^k(g)$, $i,j
= 1,\ldots,n_k$, denote the matrix elements of the operator
$\Phi_k(g)$, $g\in G$,  with respect to this basis. By
$\varphi_k(g)$ we denote the matrix of $\Phi_k(g)$.

The functions
\[
\left\{ \sqrt{n_k}\,\varphi_{ij}^k(g)\ \bigl|\ 1\leq k \leq s,\ 1\leq i,j \leq n_k\right\}
\]
form the orthonormal basis of $\mathbb{C}[G]$ relative to the inner product~(\ref{innerpro})
(see \cite{S}, Proposition 4.2.11). This means that Shur's orthogonality relations
\begin{equation}\label{ShRel}
\frac{1}{|G|}\sum_{g\in G}\sqrt{n_k n_m}\,\varphi_{ij}^k(g)\overline{\varphi_{i'j'}^m(g)}=
\begin{cases}
1&\text{if $k=m$, $i=i'$, $j=j'$,}\\
0&\text{else}
\end{cases}
\end{equation}
hold.

In what follows, it is convenient to deal with a block form of matrices. Let
\[
f_k(g)=\sqrt{n_k}\,{\rm col}
\left(\varphi_{11}^k(g),\ldots,\varphi_{n_k1}^k(g);\ldots;\varphi_{1n_k}^k(g),\ldots,\varphi_{n_kn_k}^k(g)\right)
\]
denotes the column consisting of all elements of the matrix
$\sqrt{n_k}\varphi_k(g)$.

Now we form the block matrices
\[
\mathcal{F}_k=\begin{pmatrix}
f_k(g_1)\,\dots\,f_k(g_n)
\end{pmatrix}
\mbox{\quad and \quad}
\mathcal{F}=\frac{1}{\sqrt{n}}\begin{pmatrix}
\mathcal{F}_1\\
\vdots\\
\mathcal{F}_s\\
\end{pmatrix},
\]
where $n=|G|$.
The size of $\mathcal{F}_k$ is $n_k^2\times n$ and, since $n_1^2+\ldots+n_s^2=|G|$
(see, e.g., \cite{S}, Corollary~4.4.5), $\mathcal{F}$ is a $n\times n$ matrix.

The orthogonality relations~(\ref{ShRel}) mean that the matrix $\mathcal{F}$ is unitary,
i.e.
\begin{equation}\label{unitary}
\mathcal{F}\mathcal{F}^{*}=E_n,
\end{equation}
where $\mathcal{F}^{*}$ is conjugate transpose of $\mathcal{F}$ and
$E_n$ is the  identity $n\times n$ matrix. An equivalent formulation
of~(\ref{unitary}) is  the following relation
\begin{equation}\label{unitary1}
\frac{1}{n}\sum_{g\in G}f_k(g)f^*_m(g)=
\begin{cases}
E_{n_k^2},&\text{if $k=m$},\\
0,&\text{if $k\neq m$},
\end{cases}
\quad 1\leq k,m\leq s.
\end{equation}

\begin{thm}\label{Th1}
For any matrix $A\in \mathfrak{A}\bigotimes \mathbb{C}[G]$ the following factorization
\begin{equation}\label{fact1}
A=\mathcal{F}^{*}\Lambda\mathcal{F}
\end{equation}
holds.
Here $\Lambda$ is the block diagonal matrix ${\rm diag}\left[\Lambda_1,\ldots,\Lambda_s\right]$
and the $n_k^2\times n_k^2$ matrix $\Lambda_k$ is also block diagonal matrix of the form
\[
\Lambda_k={\rm
diag}\Bigl[\underbrace{\lambda_k,\ldots,\lambda_k}_{n_k}\Bigr],
\]
where $\lambda_k=\sum_{g\in G}a(g)\varphi_k(g)$, $k=1,\ldots,s$.
\end{thm}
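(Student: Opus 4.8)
The plan is to use that $\mathcal F$ is unitary (relation~(\ref{unitary})), so that the claimed identity~(\ref{fact1}) is equivalent to $\mathcal F A=\Lambda\mathcal F$, and to verify the latter directly in block form. Index the block rows of $\mathcal F$ by $k=1,\ldots,s$, the $k$-th one being $\frac1{\sqrt n}\mathcal F_k=\frac1{\sqrt n}\bigl(f_k(g_1)\,\ldots\,f_k(g_n)\bigr)$, an $n_k^2\times n$ matrix whose $j$-th column is $\frac1{\sqrt n}f_k(g_j)=\frac{\sqrt{n_k}}{\sqrt n}\,\mathrm{vec}\,\varphi_k(g_j)$, the column-stacking of the numerical matrix $\varphi_k(g_j)$. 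Correspondingly, the $k$-th block row of $\Lambda\mathcal F$ is $\Lambda_k\cdot\frac1{\sqrt n}\mathcal F_k$, so it suffices to show that for every $k$ and every $j$ the $j$-th column of the $k$-th block row of $\mathcal F A$ equals $\frac1{\sqrt n}\Lambda_k f_k(g_j)$.

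First I would compute that column. By~(\ref{matrixA}) the $(i,j)$ entry of $A$ is $a(g_ig_j^{-1})$, so the column in question is $\frac1{\sqrt n}\sum_i f_k(g_i)\,a(g_ig_j^{-1})$; the substitution $g_i=gg_j$ rewrites it as $\frac1{\sqrt n}\sum_{g\in G}f_k(gg_j)\,a(g)$. Since $\Phi_k$ is a homomorphism, $\varphi_k(gg_j)=\varphi_k(g)\varphi_k(g_j)$, and the column-stacking identity $\mathrm{vec}(BC)=(I\otimes B)\,\mathrm{vec}\,C$ gives $f_k(gg_j)=\bigl(I_{n_k}\otimes\varphi_k(g)\bigr)f_k(g_j)$. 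Moving the coefficient $a(g)$, which commutes with the numerical matrices $\varphi_k(g)$, inside, the column becomes
\[
\frac1{\sqrt n}\Bigl(I_{n_k}\otimes\sum_{g\in G}a(g)\varphi_k(g)\Bigr)f_k(g_j)=\frac1{\sqrt n}\,(I_{n_k}\otimes\lambda_k)\,f_k(g_j).
\]
Since $I_{n_k}\otimes\lambda_k$ is precisely the block-diagonal matrix $\mathrm{diag}[\lambda_k,\ldots,\lambda_k]$ with $n_k$ diagonal blocks, that is, the matrix $\Lambda_k$ of the statement, this is exactly $\frac1{\sqrt n}\Lambda_k f_k(g_j)$. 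Hence $\mathcal F A=\Lambda\mathcal F$, and left-multiplying by $\mathcal F^{*}$ and invoking~(\ref{unitary}) yields~(\ref{fact1}).

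I expect the only delicate point to be the Kronecker/vectorization bookkeeping: one must check, with the column-stacking convention used to define $f_k$, that the correct form is $\mathrm{vec}(\varphi_k(g)\varphi_k(g_j))=(I_{n_k}\otimes\varphi_k(g))\,\mathrm{vec}\,\varphi_k(g_j)$ and that $I_{n_k}\otimes\lambda_k$ genuinely coincides with the block-diagonal description of $\Lambda_k$; a reader who prefers to avoid Kronecker products can run the same argument entrywise, collapsing $\sum_{r}\varphi^k_{pr}(g)\varphi^k_{rq}(g_j)=\varphi^k_{pq}(gg_j)$ by the homomorphism property to read off the $(p,q)$ block, $1\le p,q\le n_k$, of $\mathcal F A$. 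Everything here is purely linear-algebraic over the $\mathbb C$-algebra $\mathfrak A$, the coefficients $a(g)\in\mathfrak A$ entering only through scalar multiplication of numerical matrices, so no hypothesis on $\mathfrak A$ beyond being a $\mathbb C$-algebra with identity is used; the unitarity~(\ref{unitary}) of $\mathcal F$, that is, Schur's relations~(\ref{ShRel}), is the sole nontrivial input, and it has already been recorded.
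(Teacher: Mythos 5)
Your proof is correct and takes essentially the same route as the paper's: the same change of variable $g_i=gg_j$ in the convolution sum, and the same use of the homomorphism property to pull ${\rm diag}\bigl[\varphi_k(g),\ldots,\varphi_k(g)\bigr]=I_{n_k}\otimes\varphi_k(g)$ out of $f_k(gg_j)$, which is where $\Lambda_k$ appears. The only organizational difference is that you verify the one-sided intertwining identity $\mathcal{F}A=\Lambda\mathcal{F}$ and invoke unitarity once at the end, whereas the paper computes $\mathcal{F}A\mathcal{F}^{*}$ directly and uses the block orthogonality relation~(\ref{unitary1}) to annihilate the off-diagonal blocks $\mathcal{F}_k A\mathcal{F}_m^{*}$, $k\neq m$; both arguments rest on exactly the same two inputs.
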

\begin{proof}
Let us compute $\Lambda=\mathcal{F}A\mathcal{F}^{*}$. Taking into account the block structure
of $\mathcal{F}$, $\mathcal{F}^{*}$, we get the following block form of $\Lambda$:
\[
\mathcal{F}A\mathcal{F}^{*}=\frac{1}{n}\Bigl(\mathcal{F}_k\,A\,\mathcal{F}_m^*\Bigr)_{k,m=1}^s.
\]

By definition the matrices $\mathcal{F}$, $A$, $\mathcal{F}^{*}$, we have
\[
\mathcal{F}_k\,A\,\mathcal{F}_m^*=\sum_{i,j=1}^nf_k(g_i)a(g_ig_j^{-1})f_m^*(g_j).
\]
After change of the variable we obtain
\[
\mathcal{F}_k\,A\,\mathcal{F}_m^*=\sum_{g\in G}a(g)\sum_{h\in G}f_k(gh)f_m^*(h).
\]

Since $\Phi_k$ is a homomorphism, it follows that
$\varphi_k(gh)=\varphi_k(g)\varphi_k(h)$. Hence, $f_k(gh)={\rm
diag}\Bigl[\underbrace{\varphi_k(g),\ldots,\varphi_k(g)}_{n_k}\Bigr]f_k(h)$
and
\[
\mathcal{F}_k\,A\,\mathcal{F}_m^*={\rm
diag}\Bigl[\underbrace{\lambda_k,\ldots,\lambda_k}_{n_k}\Bigr]
\sum_{h\in G}f_k(h)f_m^*(h).
\]

By~(\ref{unitary1}) now we have
\[
\frac{1}{n}\mathcal{F}_k\,A\,\mathcal{F}_m^*=
\begin{cases}
\Lambda_k, &\text{if $k=m$},\\
0,& \text{if $k\neq m$}.
\end{cases}
\]

Thus, the matrix $\Lambda$ has the form as claimed.
\end{proof}
\begin{rem}
Let $M_k$ be any $n_k\times n_k$ matrix with entries $m^k_{ij}\in\mathfrak{A}$,  $k=1,\ldots,s$.
Define the functions $a(g)=\frac{1}{|G|}\sum_{k=1}^s\sum_{i,j=1}^{n_k}n_k\,m^k_{ij}\overline{\varphi_{ij}^k(g)}$.

It is not difficult to proof that
\[
\sum_{g\in G}a(g)\varphi_k(g)=M_k.
\]
Thus,
the corresponding matrix $A$ has the prescribed diagonal form:
\[
\Lambda={\rm
diag}\Bigl[\underbrace{M_1,\ldots,M_1}_{n_1};\ldots;\underbrace{M_s\ldots
M_s}_{n_s}\Bigr].
\]
Hence the algebra $\mathfrak{A}\bigotimes \mathbb{C}[G]$ is
isomorphic to the direct product $\bigl(\mathfrak{A}\bigotimes
\mathfrak{M}_{n_1}\bigr) \times\ldots\times
\bigl(\mathfrak{A}\bigotimes \mathfrak{M}_{n_s}\bigr)$, where
$\mathfrak{M}_{\ell}$ is the complete algebra of $\ell\times \ell$
matrices. For the group algebra $\mathbb{C}[G]$ this statement is
Wedderburn's theorem (see, \cite{S}, Theorem~5.5.6).

However, for our purposes it is required an explicit reduction of
the matrix~(\ref{matrixA}) to the block diagonal form. It is done in
Theorem~\ref{Th1}.
\end{rem}

Now we apply the theorem to the Wiener--Hopf factorization problem.
Let  $\mathfrak A$ be an algebra of continuous functions on the
contour $\Gamma$ such that any invertible element
$\lambda(t)\in\mathfrak A$ admits the Wiener--Hopf factorization
$$
\lambda(t)=\lambda^-(t)t^{\rho}\lambda^+(t), \ \rho={\rm
ind}_{\Gamma}\lambda(t),
$$
and any invertible matrix function with entries from $\mathfrak A$
admits the matrix Wiener--Hopf factorization~(\ref{factWH}). An
element $a(g)\in\mathfrak A$ now is a function on $\Gamma$ and we
will used the notation $a_g(t)$ for $a(g)$.

The symmetry of the matrix function $A(t)$
allows us to reduce the dimension of the Wiener--Hopf factorization problem. By Theorem~\ref{Th1} we have

\begin{cor}
 Let $A(t)$ be an invertible matrix function of  the form (\ref{matrixA}).
Then the Wiener--Hopf factorization problem for $A(t)$ is explicitly reduced to the $s$ problems
for $n_k\times n_k$ matrix functions
$$
\lambda_k(t)=\sum_{g\in G}a_g(t)\varphi_k(g),\ k=1,\ldots,s,
$$
where $\varphi_k(g)$ are the matrices of irreducible representations of $G$. Each partial
index of $\lambda_k(t)$ is the partial index of $A(t)$ of multiplicity $n_k$.

If $G^\prime$ is the commutator subgroup of $G$, then $A(t)$ has $[G:G^\prime]$ partial indices
that can be found explicitly. Here $[G:G^\prime]$ is the index of $G^\prime$.
\end{cor}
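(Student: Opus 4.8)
The plan is to combine Theorem~\ref{Th1} with the basic fact that Wiener--Hopf factorization behaves well under a constant invertible change of variables, and then to treat the one-dimensional blocks separately. First I would note that, since $\mathcal F$ is a constant (in $t$) unitary matrix, the factorization $A(t)=\mathcal F^{*}\Lambda(t)\mathcal F$ from Theorem~\ref{Th1} reduces the Wiener--Hopf factorization of $A(t)$ to that of $\Lambda(t)={\rm diag}[\Lambda_1(t),\ldots,\Lambda_s(t)]$: if each $\Lambda_k(t)=\Lambda_k^-(t)\,d_k(t)\,\Lambda_k^+(t)$ is a Wiener--Hopf factorization (with $d_k$ carrying the partial indices), then assembling these block-diagonally and conjugating by $\mathcal F$ gives a Wiener--Hopf factorization of $A(t)$, because $\mathcal F^{*}$ and $\mathcal F$ (being constant and invertible) can be absorbed into the $\pm$ factors. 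Since $\Lambda_k(t)={\rm diag}[\lambda_k(t),\ldots,\lambda_k(t)]$ is itself block-diagonal with $n_k$ identical blocks $\lambda_k(t)=\sum_{g\in G}a_g(t)\varphi_k(g)$, the set of partial indices of $\Lambda_k(t)$ is the multiset of partial indices of $\lambda_k(t)$, each repeated $n_k$ times. This establishes the first assertion: the problem is reduced to the $s$ matrix problems for $\lambda_k(t)$, and each partial index of $\lambda_k(t)$ occurs with multiplicity $n_k$ among the partial indices of $A(t)$.

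For the second assertion I would use the standard correspondence between the one-dimensional irreducible representations of $G$ and the characters of the abelianization $G/G^{\prime}$. The number of such one-dimensional representations is exactly $[G:G^{\prime}]$ (see, e.g., \cite{S}); order the $\Phi_k$ so that $n_1=\cdots=n_r=1$ where $r=[G:G^{\prime}]$. For each such $k$ the block $\lambda_k(t)=\sum_{g\in G}a_g(t)\varphi_k(g)$ is a scalar function in $\mathfrak A$, so by hypothesis it admits the explicit scalar Wiener--Hopf factorization $\lambda_k(t)=\lambda_k^-(t)\,t^{\rho_k}\,\lambda_k^+(t)$ with $\rho_k={\rm ind}_\Gamma\lambda_k(t)$. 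By the first part, each such $\rho_k$ is a partial index of $A(t)$ of multiplicity $n_k=1$, and since these come from the $r=[G:G^{\prime}]$ distinct one-dimensional blocks we obtain $[G:G^{\prime}]$ partial indices of $A(t)$ that are found explicitly as winding numbers of the scalars $\lambda_k(t)$.

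The only genuinely delicate point is the first step: verifying that conjugation by the constant unitary $\mathcal F$ really transports a Wiener--Hopf factorization of $\Lambda(t)$ into one of $A(t)$ without changing the partial indices. This is where one must check that $\mathcal F^{*}\Lambda^-(t)$ and $\Lambda^+(t)\mathcal F$ still satisfy the defining requirements of \eqref{factWH} — namely that they extend analytically and invertibly into $D_\pm$ — which is immediate since $\mathcal F$ and $\mathcal F^{*}$ are constant invertible matrices and the diagonal middle factor $d(t)$ (a reordering of the $d_k(t)$) is unaffected by this conjugation; the partial indices, being the exponents in $d(t)$, are exactly the union of those of the $\lambda_k(t)$ with the stated multiplicities. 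Everything else is bookkeeping: counting one-dimensional representations via $G/G^{\prime}$ and invoking the assumed explicit scalar factorization in $\mathfrak A$.
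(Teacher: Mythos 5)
Your proposal is correct and follows essentially the same route as the paper: the paper's proof simply says the first claim follows from the factorization \eqref{fact1} of Theorem~\ref{Th1} (constant unitary factors being absorbable into $A_\pm$, with $\Lambda_k$ consisting of $n_k$ identical copies of $\lambda_k$) and the second from the fact that $[G:G^\prime]$ equals the number of one-dimensional representations. You have merely written out in full the details the paper leaves implicit.
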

\begin{proof}
The first claim directly follows from (\ref{fact1}). Since $[G:G^\prime]$ coincides with
the number of one-dimensional representations (see, e.g., \cite{S}, Lemma~6.2.7), the second statement also holds.
\end{proof}

\paragraph{2. The factorization in the algebra $\mathfrak{A}\bigotimes Z\mathbb{C}[G]$}
Here we present the results of the work~\cite{A} (for an abelian
case see also~\cite{A1}).  We replace the group algebra
$\mathbb{C}[G]$ by its center $Z\mathbb{C}[G]$ and obtain a special
kind of matrix functions for which the Wiener--Hopf factorization
can be constructed explicitly. Actually, we are dealing with the
factorization of some special class of functionally commutative
matrix functions. It is known (see, e.g., \cite{LS}) that a
functionally commutative matrix function by a constant linear
transformation can be reduced to a triangular form and partial
indices of this matrix coincide with indices of its characteristic
functions. However, in our case the matrix can be explicitly reduced
to a diagonal form. For this purpose  it is only necessary to know
characters of irreducible representations of the group $G$.

We fix some enumeration of the conjugacy classes of $G$:
$K_1=\{e\},K_2,\allowbreak\ldots,K_s$. Let $h_j=|K_j|$ be the
conjugacy class order. The center $Z\mathbb{C}[G]$ of the group
algebra $\mathbb{C}[G]$ consists of class functions, i.e. the
functions $a(g)\in \mathbb{C}[G]$ that are constant on conjugacy
classes. In particular, the character $\chi$ of every representation of
the group $G$ is a class function. Hence $\chi(K_j)=\chi(g_j)$,
where $g_j$ is an arbitrary representative of the class $K_j$.

The indicator functions of the conjugacy classes, in other words,
$C_j=\sum_{g\in K_j}g$, $j=1,\ldots,s$, form a basis of the
commutative algebra $Z\mathbb{C}[G]$. Hence
\[
C_iC_j=\sum_{m=1}^nc_{ij}^mC_m,
\]
where $c_{ij}^m$ are structure coefficients of the algebra $Z\mathbb{C}[G]$.

In the space $Z\mathbb{C}[G]$ we consider the operator $\mathcal A$ of multiplication by
$a=\sum_{i=1}^sa_iC_i\in Z\mathbb{C}[G]$. The  matrix $A$ of the operator $\mathcal A$ with respect to the
basis $C_1,\ldots,C_s$ is defined by the formula
\begin{equation}\label{matrixAcenter}
\left(A\right)_{mj}=\sum_{i=1}^sa_ic_{ij}^m.
\end{equation}

In particular, if $G$ is an abelian group, then $s=n$ and $A$
coincides with the matrix~(\ref{matrixA}).

We  identify  $Z{\mathbb C}[G]$ with the algebra of
matrices of   the form~(\ref{matrixAcenter}). Denote by
$\mathfrak{A}\bigotimes Z\mathbb{C}[G]$  the algebra of matrices of
the form~(\ref{matrixAcenter}), where $a_i\in\mathfrak A$.

\begin{thm} \label{Th2}
Let $\chi_1,\ldots,\chi_s$ be the characters of irreducible complex
representations of the group $G$ and $n_1,\ldots,n_s$ the degrees of
these representations. Define the matrix $\mathcal F$ by the formula
\[
\mathcal{F} =\frac{1}{\sqrt{|G|}}\begin{pmatrix}
h_1\chi_1(K_1)I&h_2\chi_1(K_2)I&\ldots&h_s\chi_1(K_s)I\\
h_1\chi_2(K_1)I&h_2\chi_2(K_2)I&\ldots&h_s\chi_2(K_s)I\\
\vdots&\vdots&\ddots&\vdots\\
h_1\chi_s(K_1)I&h_2\chi_s(K_2)I&\ldots&h_s\chi_s(K_s)I\\
\end{pmatrix}.
\]

Then $\mathcal F$ is an invertible matrix, the matrix
\[
\mathcal{F}^{-1} =\frac{1}{\sqrt{|G|}}\begin{pmatrix}
\overline{\chi_1(K_1)}I&\overline{\chi_2(K_1)}I&\ldots&\overline{\chi_s(K_1)}I\\
\overline{\chi_1(K_2)}I&\overline{\chi_2(K_2)}I&\ldots&\overline{\chi_s(K_2)}I\\
\vdots&\vdots&\ddots&\vdots\\
\overline{\chi_1(K_s)}I&\overline{\chi_2(K_s)}I&\ldots&\overline{\chi_s(K_s)}I\\
\end{pmatrix}
\]
is its inverse, and for a matrix $A\in \mathfrak{A}\bigotimes Z\mathbb{C}[G]$ the following factorization
\[
A=\mathcal{F}^{-1}\Lambda\mathcal{F}
\]
holds.

Here $\Lambda={\rm diag}\left[\Lambda_1,\ldots,\Lambda_s\right]$,
$\Lambda_j=\frac{1}{n_j}\sum_{g\in G}a(g)\chi_j(g)$, $j=1,\ldots,s$.

\end{thm}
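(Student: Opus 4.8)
The plan is to reduce the claimed identity $A=\mathcal{F}^{-1}\Lambda\mathcal{F}$ to the equivalent intertwining relation $\mathcal{F}A=\Lambda\mathcal{F}$, which can be verified entrywise. First, though, I would dispose of the elementary part, namely that $\mathcal{F}$ is invertible with the stated inverse. Writing the $(k,i)$ block of $\mathcal{F}$ as $|G|^{-1/2}h_i\chi_k(K_i)I$ and the $(i,k)$ block of the candidate inverse as $|G|^{-1/2}\overline{\chi_k(K_i)}I$, the $(k,m)$ block of their product is $|G|^{-1}\bigl(\sum_{i=1}^{s}h_i\chi_k(K_i)\overline{\chi_m(K_i)}\bigr)I$, which equals $\delta_{km}I$ by the row orthogonality relations for irreducible characters; the identity $\mathcal{F}^{-1}\mathcal{F}=E$ then follows from the column orthogonality relations (or simply because a square matrix with a one-sided inverse is invertible).

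The representation-theoretic heart of the argument is the action of the class sums $C_i=\sum_{g\in K_i}g$ under an irreducible representation $\Phi_k$ of degree $n_k$. Since $C_i\in Z\mathbb{C}[G]$, the operator $\Phi_k(C_i)$ commutes with $\Phi_k(g)$ for every $g\in G$, so by Schur's lemma $\Phi_k(C_i)=\omega_k(C_i)I$ for a scalar $\omega_k(C_i)$; taking traces and using $\operatorname{tr}\Phi_k(C_i)=\sum_{g\in K_i}\chi_k(g)=h_i\chi_k(K_i)$ gives $\omega_k(C_i)=h_i\chi_k(K_i)/n_k$. Applying the algebra homomorphism $\Phi_k$ to the structure relation $C_iC_j=\sum_{m}c_{ij}^{m}C_m$ and comparing the resulting scalars then yields
$$\sum_{m=1}^{s}h_m\chi_k(K_m)\,c_{ij}^{m}=\frac{h_ih_j}{n_k}\,\chi_k(K_i)\chi_k(K_j),\qquad 1\le i,j,k\le s,$$
and this is the identity that makes everything fit together.

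With it in hand the computation is short. From $(A)_{mj}=\sum_i a_ic_{ij}^{m}$ one gets $(\mathcal{F}A)_{kj}=|G|^{-1/2}\sum_i a_i\bigl(\sum_m h_m\chi_k(K_m)c_{ij}^{m}\bigr)$, whereas $(\Lambda\mathcal{F})_{kj}=\Lambda_k(\mathcal{F})_{kj}=|G|^{-1/2}n_k^{-1}\bigl(\sum_i a_ih_i\chi_k(K_i)\bigr)h_j\chi_k(K_j)$; here I also use that $a$ is a class function, so that $\sum_{g\in G}a(g)\chi_k(g)=\sum_i a_ih_i\chi_k(K_i)$ and hence $\Lambda_k=n_k^{-1}\sum_i a_ih_i\chi_k(K_i)$, matching the formula in the statement. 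The two expressions coincide term by term in $i$ by the displayed identity, so $\mathcal{F}A=\Lambda\mathcal{F}$, and left-multiplying by $\mathcal{F}^{-1}$ gives $A=\mathcal{F}^{-1}\Lambda\mathcal{F}$.

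I do not expect a genuine obstacle: the proof rests entirely on the orthogonality relations for characters and the central-character formula $\omega_k(C_i)=h_i\chi_k(K_i)/n_k$, both standard. The points that need care are purely bookkeeping ones — keeping the complex conjugates in $\mathcal{F}^{-1}$ in the right places, not conflating the class size $h_i$ with the representation degree $n_k$, and remembering that the coefficient function $a(g)$ is constant on conjugacy classes, which is exactly what lets $\Lambda_k$ be rewritten as a sum over conjugacy classes so as to match $(\mathcal{F}A)_{kj}$.
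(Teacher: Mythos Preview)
Your argument is correct and rests on the same key identity as the paper's proof, namely $\sum_{m} h_m \chi_k(K_m)\,c_{ij}^m = \tfrac{h_i h_j}{n_k}\chi_k(K_i)\chi_k(K_j)$; the paper cites this as the ``second character relation'' from van der Waerden, whereas you derive it from Schur's lemma via the central-character formula $\omega_k(C_i)=h_i\chi_k(K_i)/n_k$. The only other difference is organizational: the paper computes $\mathcal{F}A\mathcal{F}^{-1}$ directly and then needs a second application of character orthogonality to kill the off-diagonal blocks, while your intertwining verification $\mathcal{F}A=\Lambda\mathcal{F}$ sidesteps that second step --- a slightly more economical route to the same conclusion.
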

\begin{proof} Let us first find
\[
\left(\mathcal{F}\mathcal{F}^{-1}\right)_{ij}=\frac{1}{|G|}\sum_{m=1}^s h_m\chi_i(K_m)\overline{\chi_j(K_m)}I=
\frac{1}{|G|}\sum_{g\in G}\chi_i(g)\overline{\chi_j(g)}I.
\]

By the fourth character relation (see \cite{vdW}, Section 14.6), we have
\begin{equation}\label{4ChReel}
\sum_{g\in G}\chi_i(g)\overline{\chi_j(g)}=
\begin{cases}
|G|,&i=j\\
0,&i\ne j
\end{cases}.
\end{equation}
Hence $\left(\mathcal{F}\mathcal{F}^{-1}\right)_{ij}=\delta_{ij}I$ and
$\mathcal{F}^{-1}$ is the inverse of $\mathcal{F}$.

Now we compute $\mathcal{F}A\mathcal{F}^{-1}$.  From  (\ref{matrixAcenter}) and the definition
of $\mathcal{F}$, $\mathcal{F}^{-1}$ it follows  that
\[
\left(\mathcal{F}A\mathcal{F}^{-1}\right)_{ij}=
\frac{1}{|G|}\sum_{l,k,m=1}^s h_m a_k c_{kl}^m\chi_i(K_m)\overline{\chi_j(K_l)}.
\]
Since
\[
\sum_{m=1}^sh_mc_{kl}^m\chi_i(K_m)=\frac{h_k
h_l}{n_i}\chi_i({K}_k)\chi_i(K_l)
\]
(the second character relation, \cite{vdW}, Section 14.6), we obtain
\[
\left(\mathcal{F}A\mathcal{F}^{-1}\right)_{ij}=
\frac{1}{n_i}\sum_{k=1}^s  a_kh_k \chi_i({K}_k) \frac{1}{|G|}\sum_{l=1}^s h_l\chi_i(K_l)\overline{\chi_j(K_l)}.
\]
We can transform the second sum to the following form
\[
\frac{1}{|G|}\sum_{l=1}^s h_l\chi_i(K_l)\overline{\chi_j(K_l)}=
\frac{1}{|G|}\sum_{g\in G}\chi_i(g)\overline{\chi_j(g)}.
\]
Here we permit $g$ to run over all group elements since the characters are class functions.
Applying the fourth character relation to this sum we get
\[
\left(\mathcal{F}A\mathcal{F}^{-1}\right)_{ij}=
\frac{\delta_{ij}}{n_i}\sum_{k=1}^s  a_k h_k \chi_i({K}_k)=\frac{\delta_{ij}}{n_i}\sum_{g\in G}  a(g)\chi_i(g).
\]
Thus, $\mathcal{F}A\mathcal{F}^{-1}=\Lambda$.
\end{proof}

Now let $\mathfrak A$ be an algebra of continuous functions on the
contour $\Gamma$ as in the previous subsection. The matrix function
$A(t)$ is invertible if and only if the functions
$$
\Lambda_j(t)=\frac{1}{n_j}\sum_{g\in G}a_g(t)\chi_j(g), \ j=1,\ldots,s,
$$
non-vanish on $\Gamma$. Let
$\Lambda_j(t)=\Lambda_j^-(t)t^{\rho_j}\Lambda_j^+(t)$ be the
Wiener--Hopf factorization of $\Lambda_j(t)$. Here $\rho_j={\rm
ind}_{\Gamma}\Lambda_j(t)$. Then
\[
\Lambda(t)={\rm diag}[\Lambda_1^-(t),\ldots,\Lambda_s^-(t)]\cdot
{\rm diag}[t^{\rho_1},\ldots,t^{\rho_s}]\cdot
{\rm diag}[\Lambda_1^+(t),\ldots,\Lambda_s^+(t)]
\]
is the Wiener--Hopf factorization of $\Lambda(t)$.

Theorem \ref{Th2} now leads to the following result:
\begin{cor}
 Let $A(t)$ be an invertible matrix function of  the form (\ref{matrixAcenter}).
Then its Wiener--Hopf factorization
$$
A(t)=A_-(t)d(t)A_+(t)
$$
can be constructed by the formulas
\[
A_-(t)=\frac{1}{\sqrt{|G|}} \begin{pmatrix}
\Lambda_1^-(t)\overline{\chi_1(K_1)}&\Lambda_2^-(t)\overline{\chi_2(K_1)}&
\ldots&\Lambda_s^-(t)\overline{\chi_s(K_1)}\\
\Lambda_1^-(t)\overline{\chi_1(K_2)}&\Lambda_2^-(t)\overline{\chi_2(K_2)}&
\ldots&\Lambda_s^-(t)\overline{\chi_s(K_2)}\\
\vdots&\vdots&\ddots&\vdots\\
\Lambda_1^-(t)\overline{\chi_1(K_s)}&\Lambda_2^-(t)\overline{\chi_2(K_s)}&
\ldots&\Lambda_s^-(t)\overline{\chi_s(K_s)}
\end{pmatrix},
\]

\[
d(t)= {\rm diag}[t^{\rho_1},\ldots,t^{\rho_s}],
\]

\[
A_+(t)= \frac{1}{\sqrt{|G|}}\begin{pmatrix}
h_1\Lambda_1^+(t){\chi_1(K_1)}&h_2\Lambda_1^+(t){\chi_1(K_2)}&\ldots&h_s\Lambda_1^+(t){\chi_1(K_s)}\\
h_1\Lambda_2^+(t){\chi_2(K_1)}&h_2\Lambda_2^+(t){\chi_2(K_2)}&\ldots&h_s\Lambda_2^+(t){\chi_2(K_s)}\\
\vdots&\vdots&\ddots&\vdots\\
h_1\Lambda_s^+(t){\chi_s(K_1)}&h_2\Lambda_s^+(t){\chi_2(K_s)}&\ldots&h_s\Lambda_s^+(t){\chi_s(K_s)}
\end{pmatrix}.\tag*{\qed}
\]
\end{cor}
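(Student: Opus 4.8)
The plan is to verify directly that the three matrix functions $A_-(t)$, $d(t)$, $A_+(t)$ displayed in the corollary multiply to $A(t)$, and that $A_\pm(t)$ together with their analytic continuations into $D_\pm$ are invertible. The key observation is that all three matrices are built by conjugating diagonal blocks with the constant matrices $\mathcal{F}^{-1}$, $\mathcal{F}$ of Theorem~\ref{Th2}: indeed $A_-(t)=\mathcal{F}^{-1}\,\mathrm{diag}[\Lambda_1^-(t),\ldots,\Lambda_s^-(t)]\,\mathcal{F}_0$ for a suitable constant factor, and likewise for $A_+(t)$, with the inner diagonal block matrices precisely those appearing in the Wiener--Hopf factorization $\Lambda(t)=\mathrm{diag}[\Lambda_j^-(t)]\cdot d(t)\cdot\mathrm{diag}[\Lambda_j^+(t)]$ of $\Lambda(t)$ derived just above the statement.

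First I would make the bookkeeping explicit. Looking at the formula for $A_-(t)$, its $(i,j)$ entry is $\frac{1}{\sqrt{|G|}}\Lambda_j^-(t)\,\overline{\chi_j(K_i)}$, which is exactly the $(i,j)$ entry of $\mathcal{F}^{-1}$ times $\Lambda_j^-(t)$; hence $A_-(t)=\mathcal{F}^{-1}\cdot\mathrm{diag}[\Lambda_1^-(t),\ldots,\Lambda_s^-(t)]$ (the scalar $1/\sqrt{|G|}$ is absorbed in $\mathcal{F}^{-1}$). Similarly $A_+(t)$ has $(i,j)$ entry $\frac{1}{\sqrt{|G|}}h_j\Lambda_i^+(t)\chi_i(K_j)$, so $A_+(t)=\mathrm{diag}[\Lambda_1^+(t),\ldots,\Lambda_s^+(t)]\cdot\mathcal{F}$. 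Multiplying out and using $d(t)=\mathrm{diag}[t^{\rho_1},\ldots,t^{\rho_s}]$,
\[
A_-(t)\,d(t)\,A_+(t)=\mathcal{F}^{-1}\,\mathrm{diag}[\Lambda_j^-(t)]\cdot d(t)\cdot\mathrm{diag}[\Lambda_j^+(t)]\,\mathcal{F}=\mathcal{F}^{-1}\Lambda(t)\mathcal{F}=A(t),
\]
the last equality being Theorem~\ref{Th2}. So the factorization identity is essentially automatic once one recognizes the block structure.

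Next I would check that $A_\pm(t)$ satisfy the analytic requirements of a Wiener--Hopf factorization. Since $\mathcal{F}$ and $\mathcal{F}^{-1}$ are constant invertible matrices and $\mathrm{diag}[\Lambda_j^\pm(t)]$ are, by the scalar factorization of each $\Lambda_j$, continuous matrix functions on $\Gamma$ admitting invertible analytic continuation into $D_\pm$, the products $A_-(t)=\mathcal{F}^{-1}\,\mathrm{diag}[\Lambda_j^-(t)]$ and $A_+(t)=\mathrm{diag}[\Lambda_j^+(t)]\,\mathcal{F}$ inherit these properties; invertibility of $A_\pm(z)$ in $D_\pm$ follows from $\det A_-(z)=\det\mathcal{F}^{-1}\prod_j\Lambda_j^-(z)\neq0$ in $D_-$ and similarly in $D_+$. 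Finally $d(t)=\mathrm{diag}[t^{\rho_1},\ldots,t^{\rho_s}]$ is already of the required diagonal form, so~(\ref{factWH}) holds with partial indices $\rho_1,\ldots,\rho_s$.

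I do not anticipate a serious obstacle here: the corollary is a transcription of the recipe ``factor each $\Lambda_j$ scalarly, then conjugate back by $\mathcal{F}^{\mp1}$,'' and the only thing to watch is the precise placement of the class-size weights $h_j$ and the complex conjugations, which are forced by the explicit forms of $\mathcal{F}$ and $\mathcal{F}^{-1}$ in Theorem~\ref{Th2}. The mildly delicate point is simply to confirm that absorbing the $1/\sqrt{|G|}$ and the $h_j$'s into $\mathcal{F}^{-1}$ and $\mathcal{F}$ respectively reproduces exactly the matrices in the statement rather than, say, their transposes; this is a one-line index check against the displayed forms of $\mathcal{F}$ and $\mathcal{F}^{-1}$. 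Hence the proof reduces to the matrix identity $A=\mathcal{F}^{-1}\Lambda\mathcal{F}$ already established, and the ``$\qed$'' attached to the corollary statement signals that no further argument is expected.
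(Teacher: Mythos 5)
Your proposal is correct and follows exactly the route the paper takes implicitly: it factors $\Lambda(t)$ diagonally, sets $A_-=\mathcal{F}^{-1}\,\mathrm{diag}[\Lambda_j^-]$ and $A_+=\mathrm{diag}[\Lambda_j^+]\,\mathcal{F}$, and invokes Theorem~\ref{Th2} together with the constancy and invertibility of $\mathcal{F}$. The only blemish is the stray factor $\mathcal{F}_0$ in your opening paragraph, which your subsequent index check correctly eliminates.
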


\section{Examples}

\begin{ex} Let $G=V_4$ be the Klein four-group and $A(t)$ has the form (\ref{matrixA}). $V_4$ is an abelian subgroup of
the symmetric group $S_4$:
\[
V_4=\left\{e,(12)(34),(13)(24),(14)(23)\right\},
\]
which is isomorphic to the direct product $C_2\times C_2$ of cyclic
groups of order 2. Hence, the matrix $A$ is a 2-level circulant
matrix, i.e. a $2\times 2$ block circulant matrix with  $2\times 2$
circulant blocks:

\[
A(t)=\left(\begin{array}{cc|cc}
a_1(t)&a_2(t)&a_3(t)&a_4(t)\\
a_2(t)&a_1(t)&a_4(t)&a_3(t)\\
\hline
a_3(t)&a_4(t)&a_1(t)&a_2(t)\\
a_4(t)&a_3(t)&a_2(t)&a_1(t)\\
\end{array}\right).
\]
\\

The character table of $V_4$ (see, e.g., \cite[Ch.8, S.5]{K},
\cite[Table 4.4]{S})
\begin{table}[htbp]
\begin{center}
\begin{tabular}{|c|c|c|c|c|}
\hline $V_4$& $\ \ e \ \ $ & $(12)(34)$& $(13)(24)$&$(14)(23)$ \\
\hline $\ \chi_1\ $& $1$& $1$& $1$&$1$ \\
\hline $\ \chi_2$& $1$& $\:-1\quad$& $1$&$\:-1\quad$ \\
\hline $\ \chi_3$& $1$& $1$& $\:-1\quad$&$\:-1\quad$ \\
\hline $\ \chi_4$& $1$& $\:-1\quad$& $\:-1\quad$&$1$ \\
\hline
\end{tabular}
\end{center}
\end{table}

\noindent defines the matrix
\[
\mathcal{F}=\frac{1}{2}\left(\begin{array}{rrrr}
$1$& $1$& $1$&$1$ \\
$1$& $-1$& $1$&$-1$ \\
$1$& $1$& $-1$&$-1$ \\
$1$& $-1$& $-1$&$1$ \\
\end{array}\right)
\]
that reduces $A(t)$ to the diagonal form with the elements:
\begin{align*}
\Lambda_1(t)=a_1(t)+a_2(t)+a_3(t)+a_4(t),&\quad \Lambda_2(t)=a_1(t)-a_2(t)+a_3(t)-a_4(t),\\
\Lambda_3(t)=a_1(t)+a_2(t)-a_3(t)-a_4(t),&\quad \Lambda_4(t)=a_1(t)-a_2(t)-a_3(t)+a_4(t).
\end{align*}
The indices of these functions are the partial indices of $A(t)$.
\end{ex}

\begin{ex} Let $G=S_3$ be the symmetric group of degree $3$. It is a non-abelian group of the  order $|G|=6$.
We will used the following enumeration of the group:
$G=\left\{e,(12),(13),(23),(123),(132)\right\}$.

1. {\em The factorization in the algebra $\mathfrak{A}\bigotimes \mathbb{C}[S_3]$}.
In this case, by~(\ref{matrixA}),  the matrix function $A(t)$ has the form
\[
A(t)=\begin{pmatrix}
a_1(t)&a_2(t)&a_3(t)&a_4(t)&a_6(t)&a_5(t)\\
a_2(t)&a_1(t)&a_6(t)&a_5(t)&a_3(t)&a_4(t)\\
a_3(t)&a_5(t)&a_1(t)&a_6(t)&a_4(t)&a_2(t)\\
a_4(t)&a_6(t)&a_5(t)&a_1(t)&a_2(t)&a_3(t)\\
a_5(t)&a_3(t)&a_4(t)&a_2(t)&a_1(t)&a_6(t)\\
a_6(t)&a_4(t)&a_2(t)&a_3(t)&a_5(t)&a_1(t)\\
\end{pmatrix}.
\]
A complete set of inequivalent irreducible unitary representations $\{\Phi_1,\Phi_2,\allowbreak\Phi_3\}$
is defined by the following table (see, e.g., \cite[Ch.8, S.2]{K})

\begin{table}[htbp]\caption{Irreducible representations of $S_3$}\label{tab1}
\begin{center}
\begin{tabular}{|c|c|c|c|c|c|c|}
\hline$S_3$ & $e $ & $(12)$& $(13)$& $(23)$ & $(123)$& $(132)$\\
\hline $\Phi_1$& $1$& $1$& $1$& $1$& $1$& $1$\\
\hline $\Phi_2$& $1$& $\:-1\quad$ & $\:-1\quad$ & $\:-1\quad$ & $1$& $1$\\
\hline $\Phi_3$& $\begin{pmatrix}1&0\\0&1\end{pmatrix}$& $\begin{pmatrix}0&1\\1&0\end{pmatrix}$&
$\begin{pmatrix}0&\varepsilon\\\varepsilon^{-1}&0\end{pmatrix}$&
$\begin{pmatrix}0&\varepsilon^{-1}\\\varepsilon&0\end{pmatrix}$&
$\begin{pmatrix}\varepsilon&0\\0&\varepsilon^{-1}\end{pmatrix}$&
$\begin{pmatrix}\varepsilon^{-1}&0\\0&\varepsilon\end{pmatrix}$\\
\hline
\end{tabular}
\end{center}
\end{table}

\noindent Here $\varepsilon=\frac{-1+i\sqrt{3}}{2}$, $n_1=n_2=1$, $n_3=2$.

Let us form the matrix $\mathcal{F}$:
\[
\mathcal{F}=\frac{1}{\sqrt{6}}\left(\begin{array}{rrcccc}
1&1&1&1&1&1\\
1&-1&\:-1\quad&\:-1\quad&1&1\\
\sqrt{2}&0&0&0&\sqrt{2}\,\varepsilon&\sqrt{2}\,\varepsilon^{-1}\\
0&\sqrt{2}&\sqrt{2}\,\varepsilon^{-1}&\sqrt{2}\,\varepsilon&0&0\\
0&\sqrt{2}&\sqrt{2}\,\varepsilon&\sqrt{2}\,\varepsilon^{-1}&0&0\\
\sqrt{2}&0&0&0&\sqrt{2}\,\varepsilon^{-1}&\sqrt{2}\,\varepsilon\\
\end{array}\right).
\]
This matrix reduces $A(t)$ to the block diagonal form
$$
\Lambda(t)={\rm diag}[\Lambda_1,\Lambda_2,\Lambda_3,\Lambda_3],
$$
where
\[
\Lambda_1(t)=\sum_{g\in S_3}a(g)\Phi_1(g)=a_1(t)+a_2(t)+a_3(t)+a_4(t)+a_5(t)+a_6(t),
\]
\[
\Lambda_2(t)=\sum_{g\in S_3}a(g)\Phi_2(g)= a_1(t)-a_2(t)-a_3(t)-a_4(t)+a_5(t)+a_6(t),
\]
\begin{multline*}
\Lambda_3(t)=\frac{1}{\sqrt{2}}\sum_{g\in S_3}a(g)\Phi_3(g)=\\
\begin{pmatrix}
a_1(t)+\varepsilon a_5(t)+\varepsilon^{-1}a_6(t)&a_2(t)+\varepsilon a_3(t)+\varepsilon^{-1}a_4(t)\\
a_2(t)+\varepsilon^{-1} a_3(t)+\varepsilon a_4(t)&a_1(t)+\varepsilon^{-1} a_5(t)+\varepsilon a_6(t)
\end{pmatrix}.
\end{multline*}

Thus, the problem of the  Wiener--Hopf factorization is reduced to the one-dimensional problems for
$\Lambda_1(t)$, $\Lambda_2(t)$ and the two-dimensional problem for $\Lambda_3$. In particular, for
the partial indices of $A(t)$ the following relations
\begin{gather*}
\rho_1={\rm ind}_\Gamma \Lambda_1(t),\quad \rho_2={\rm ind}_\Gamma \Lambda_2(t),\\
\rho_3+\rho_4={\rm ind}_\Gamma\,\det\Lambda_3(t), \quad \rho_3=\rho_5,\quad \rho_4=\rho_6
\end{gather*}
hold. If, for example, the condition $a_4(t)=-\varepsilon a_2(t)-\varepsilon^{-1} a_3(t)$ is fulfilled,
then the matrix $\Lambda_3(t)$ has a triangular form and the factorization $A(t)$ can be constructed explicitly.

2. {\em The factorization in the algebra $\mathfrak{A}\bigotimes
Z\mathbb{C}[S_3]$}. In $S_3$ a conjugace class consists of
permutations that have the same cycle type.  There are 3 conjugace
classes $K_1=\{e\}, K_2=\{(12)\},K_3=\{(123)\}$ and $h_1=1$,
$h_2=3$, $h_3=2$. The multiplication table for the elements $C_j$ of
the basis of $Z\mathbb{C}[S_3]$ is given below

\begin{table}[htbp]
\begin{center}
\begin{tabular}{|c|c|c|c|}
\hline  $S_3$ & $C_1$ & $C_2$& $C_3$\\
\hline $C_1$& $C_1$& $C_2$& $C_3$\\
\hline $C_2$& $C_2$& $3C_1+3C_2$& $2C_2$\\
\hline $C_3$& $C_3$& $2C_2$& $2C_1+C_3$\\
\hline
\end{tabular}
\end{center}
\end{table}

Let us form the matrix function $A(t)$ by~(\ref{matrixAcenter}):
\[
A(t)=\begin{pmatrix}
a_1(t)&3a_2(t)&2a_3(t)\\
a_2(t)&a_1(t)+2a_3(t)&2a_2(t)\\
a_3(t)&3a_2(t)&a_1(t)+a_3(t)\\
\end{pmatrix}.
\]
Using Table~\ref{tab1}, we can obtain the character table of $S_3$
\begin{table}[htbp]
\begin{center}
\begin{tabular}{|c|c|c|c|}
\hline$S_3$ & $\ \ e \ \ $ & $\{(12)\}$& $\{(123)\}$\\
\hline $\ \chi_1\ $& $1$& $1$& $1$\\
\hline $\ \chi_2$& $1$& $\:-1\quad$& $1$\\
\hline $\ \chi_3$& $2$& $0$& $\:-1\quad$\\
\hline
\end{tabular}
\end{center}
\end{table}

\noindent Then we get
\[
\mathcal{F}=\frac{1}{\sqrt{6}}\left(\begin{array}{rrr}
$1$& $3$& $2$\\
$1$& $-3$& $2$\\
$2$& $0$& $-2$\\
\end{array}\right).
\]

Now $A(t)$ is reduced to the  diagonal form
\[
\Lambda(t)={\rm diag}[\Lambda_1,\Lambda_2,\Lambda_3],
\]
where
$\Lambda_1(t)=a_1(t)+3a_2(t)+2a_3(t)$, $\Lambda_2(t)=a_1(t)-3a_2(t)+2a_3(t)$,
$\Lambda_3(t)=a_1(t)-a_3(t)$.
The partial indices of $A(t)$ coincide with the indices of these functions.
\end{ex}

\begin{ex}
 Let $G=Q_8=\{\pm 1, \pm i, \pm j, \pm k\}$ be the group of quaternions. Here $1$ is
identity of the group, $(-1)^2=1$, and  the relations $i^2 = j^2 = k^2 = ijk = -1$ are fulfilled.
$Q_8$ is a non-abelian group of the order $|G|=8$.
We will used the following enumeration of the group:
$G=\left\{1,-1,i,-i,j,-j,k,-k\right\}$.

1. {\em The factorization in the algebra $\mathfrak{A}\bigotimes \mathbb{C}[Q_8]$.}
By the formula~(\ref{matrixA}), the matrix function $A(t)$ has the following block form
with a $2\times 2$ circulant blocks:
\[
A(t)=\left(\begin{array}{cc|cc|cc|cc}
a_1(t)&a_2(t)&a_4(t)&a_3(t)&a_6(t)&a_5(t)&a_8(t)&a_7(t)\\
a_2(t)&a_1(t)&a_3(t)&a_4(t)&a_5(t)&a_6(t)&a_7(t)&a_8(t)\\
\hline
a_3(t)&a_4(t)&a_1(t)&a_2(t)&a_8(t)&a_7(t)&a_5(t)&a_6(t)\\
a_4(t)&a_3(t)&a_2(t)&a_1(t)&a_7(t)&a_8(t)&a_6(t)&a_5(t)\\
\hline
a_5(t)&a_6(t)&a_7(t)&a_8(t)&a_1(t)&a_2(t)&a_4(t)&a_3(t)\\
a_6(t)&a_5(t)&a_8(t)&a_7(t)&a_2(t)&a_1(t)&a_3(t)&a_4(t)\\
\hline
a_7(t)&a_8(t)&a_6(t)&a_5(t)&a_3(t)&a_4(t)&a_1(t)&a_2(t)\\
a_8(t)&a_7(t)&a_5(t)&a_6(t)&a_4(t)&a_3(t)&a_2(t)&a_1(t)\\
\end{array}\right).
\]

There are four one-dimensional representations and a single two-di\-men\-sional
unitary representation of $Q_8$ (see, e.g., \cite[Ch.8, Example 8.2.4]{S}).  They
are given by the following table

\begin{table}[htbp]\caption{Irreducible representations of $Q_8$}\label{tab2}
\begin{center}
\begin{tabular}{|c|c|c|c|c|}
\hline$Q_8$ & $\:\pm 1\quad $ & $\:\pm i\quad$& $\:\pm j\quad$& $\:\pm k\quad$\\
\hline $\Phi_1$& $1$& $1$& $1$& $1$\\
\hline $\Phi_2$& $1$& $1$ & $\:-1\quad$ & $\:-1\quad$\\
\hline $\Phi_3$& $1$& $\:-1\quad$ & $1$ & $\:-1\quad$\\
\hline $\Phi_4$& $1$& $\:-1\quad$ & $\:-1\quad$ & $1$\\
\hline $\Phi_5$& $\pm\begin{pmatrix}1&0\\0&1\end{pmatrix}\quad$&
$\pm\begin{pmatrix}i&0\\0&-i\end{pmatrix}\quad$&
$\pm\begin{pmatrix}0&1\\-1&0\end{pmatrix}\quad$&
$\pm\begin{pmatrix}0&i\\i&0\end{pmatrix}\quad$\\
\hline
\end{tabular}
\end{center}
\end{table}
Therefore, we have
\[
\mathcal{F}=
\frac{1}{\sqrt{8}}\left(\begin{array}{rrrrrrrr}
1&1&1&1&1&1&1&1\\
1&1&1&1&-1&-1&-1&-1\\
1&1&-1&-1&1&1&-1&-1\\
1&1&-1&-1&-1&-1&1&1\\
\sqrt{2}&-\sqrt{2}&\sqrt{2}i&-\sqrt{2}i&0&0&0&0\\
0&0&0&0&-\sqrt{2}&\sqrt{2}&i&-i\\
0&0&0&0&\sqrt{2}&-\sqrt{2}&i&-i\\
\sqrt{2}&-\sqrt{2}&-\sqrt{2}i&\sqrt{2}i&0&0&0&0\\
\end{array}\right).
\]
The matrix $\mathcal{F}$ reduces $A(t)$ to the block diagonal form
$$
\Lambda(t)={\rm diag}[\Lambda_1,\Lambda_2,\Lambda_3,\Lambda_4,\Lambda_5,\Lambda_6],
$$
where
\[
\Lambda_1(t)=a_1(t)+a_2(t)+a_3(t)+a_4(t)+a_5(t)+a_6(t)+a_7(t)+a_8(t),
\]
\[
\Lambda_2(t)=a_1(t)+a_2(t)+a_3(t)+a_4(t)-a_5(t)-a_6(t)-a_7(t)-a_8(t),
\]
\[
\Lambda_3(t)=a_1(t)+a_2(t)-a_3(t)-a_4(t)+a_5(t)+a_6(t)-a_7(t)-a_8(t),
\]
\[
\Lambda_4(t)=a_1(t)+a_2(t)-a_3(t)-a_4(t)-a_5(t)-a_6(t)-a_7(t)-a_8(t),
\]
\[
\Lambda_5=\begin{pmatrix}
a_1(t)-a_2(t)+ia_3(t)-ia_4(t)&a_5(t)-a_6(t)+ia_7(t)-ia_8(t)\\
-a_5(t)+a_6(t)+ia_7(t)-ia_8(t)&a_1(t)-a_2(t)-ia_3(t)-ia_4(t)
\end{pmatrix}.
\]

Thus, the problem of the  Wiener--Hopf factorization for $A(t)$ is reduced to the four scalar problems
and the two-dimensional problem for $\Lambda_5$. In particular, for
the partial indices of $A(t)$ the following relations
\begin{gather*}
\rho_j={\rm ind}_\Gamma \Lambda_j(t),\ j=1,\ldots,4,\\
\rho_5+\rho_6={\rm ind}_\Gamma \det\Lambda_5(t),\quad   \rho_5=\rho_7,\ \rho_6=\rho_8
\end{gather*}
are fulfilled.

2. {\em The factorization in the  algebra $\mathfrak{A}\bigotimes Z\mathbb{C}[Q_8]$.}
There are 5
conjugace classes $K_1=\{e\}, K_2=\{ -1\}, K_3=\{\pm i\}$, $K_4=\{\pm j\}$; $K_5=\{\pm k\}$,
$h_1=h_2=1$, $h_3=h_4=h_5=2$.

The multiplication table for the elements $C_j$ of the basis of $Z\mathbb{C}[Q_8]$ has the following form

\begin{table}[htbp]
\begin{center}
\begin{tabular}{|c|c|c|c|c|c|}
\hline $Q_8$& $\ C_1\ $ & $\ C_2\ $& $C_3$&$C_4$&$C_5$\\
\hline$\ C_1\ $& $C_1$& $C_2$& $C_3$&$C_4$&$C_5$\\
\hline $C_2$& $C_2$& $C_1$& $C_3$&$C_4$&$C_5$\\
\hline $C_3$& $C_3$& $C_3$& $2C_1+2C_2$&$2C_5$&$2C_4$\\
\hline $C_4$& $C_4$& $C_4$&$2C_5$& $2C_1+2C_2$&$2C_3$\\
\hline $C_5$& $C_5$& $C_5$& $2C_4$&2$C_3$&$2C_1+2C_2$\\
\hline
\end{tabular}
\end{center}
\end{table}

Hence, by the formula~(\ref{matrixAcenter}), we obtain
\[
A(t)=\left(\begin{array}{ccccc}
a_1(t)&a_2(t)&2a_3(t)&2a_4(t)&2a_5(t)\\
a_2(t)&a_1(t)&2a_3(t)&2a_4(t)&2a_5(t)\\
a_3(t)&a_3(t)&a_1(t)+a_2(t)&a_5(t)&2a_4(t)\\
a_4(t)&a_4(t)&2a_5(t)&a_1(t)+a_2(t)&2a_3(t)\\
a_5(t)&a_5(t)&2a_4(t)&2a_3(t)&a_1(t)+a_2(t)\\
\end{array}\right).
\]

The group $Q_8$ has the following character table (see Table~\ref{tab2}):

\begin{table}[htbp]
\begin{center}
\begin{tabular}{|c|c|c|c|c|c|}
\hline $Q_8$& $\ \{1\}\ $ & $\{-1\}\ \ $& $\{\pm i\}\ \ $&$\{\pm j\}\ \ $&$\{\pm k\}\ \ $\\
\hline $\ \chi_1\ $& $1$& $1$& $1$&$1$&$1$\\
\hline $\ \chi_2$& $1$& $1$& $1$& $\:-1\quad$& $\:-1\quad$\\
\hline $\ \chi_3$& $1$& $1$& $\:-1\quad$&$1$&$\:-1\quad$\\
\hline $\ \chi_4$& $1$& $1$& $\:-1\quad$&$\:-1\quad$&$1$\\
\hline $\ \chi_5$& $2$& $\:-2\quad$& $0$&$0$&$0$\\
\hline
\end{tabular}
\end{center}
\end{table}

By Theorem~\ref{Th2}, the matrix $\mathcal{F}$
\[
\mathcal{F}=\frac{1}{\sqrt{8}}\left(\begin{array}{rrrrr}
1& 1& {2}&{2}&{2}\\
1& 1& {2}& -{2}&-{2}\\
1& 1& -{2}&{2}&-{2}\\
1& 1& -{2}&-{2}&{2}\\
2& -2& 0&0&0
\end{array}\right)
\]
reduces $A(t)$ to the diagonal form with the following diagonal elements:
\[
\begin{split}
\Lambda_1(t)&=a_1(t)+a_2(t)+2a_3(t)+2a_4(t)+2a_5(t),\\
\Lambda_2(t)&=a_1(t)+a_2(t)+2a_3(t)-2a_4(t)-2a_5(t),\\
\Lambda_3(t)&=a_1(t)+a_2(t)-2a_3(t)+2a_4(t)-2a_5(t),\\
\Lambda_4(t)&=a_1(t)+a_2(t)-2a_3(t)-2a_4(t)+2a_5(t),\\
\Lambda_5(t)&=a_1(t)-a_2(t).
\end{split}
\]
The indices of these functions are the partial indices of $A(t)$.
\end{ex}


\begin{thebibliography}{1}
\bibitem{CG} K. F. Clancey and I. Gohberg, \textit{Factorization of Matrix Functions and
Singular Integral Operators.}  Birkh\"auser, 1981.

\bibitem{LS} G. S. Lintinchuk, I. M. Spitkovskii, \textit{Factorization of Measurable Matrix Functions.}
Birkh\"auser, 1987.


\bibitem{A} V. M. Adukov, \textit{On Wiener--Hopf factorization
of functionally commutative matrix functions.} Bulletin of the South
Ural State University. Series "Mathematics. Mechanics. Physics"\,
\textbf{5}(2) (2013), 6--12.

\bibitem{A1} V. M. Adukov,  \textit{Wiener-Hopf  equations  on  a  subsemigroup
of  a  discrete  Abelian group.  II},  VINITI  AN  SSSR,  N2431a-78,  1978  [Russian].



\bibitem{vdW} B. L. van der Waerden, \textit{Algebra.} Vol. II.
Springer-Verlag, 1991.

\bibitem{K} A. A. Kostrikin, \textit{Introduction to Algebra (Universitext).}
Springer-Verlag, 1982.

\bibitem{S} B. Steinberg, \textit{Representation Theory of Finite Groups:
An Introductory Approach (Universitext).}  Springer, 2012.

\end{thebibliography}
\end{document}